\documentclass[letterpaper,11pt,twoside,keywordsasfootnote,addressatend,noinfoline]{article}
\usepackage{fullpage}
\usepackage[english]{babel}
\usepackage{amssymb}
\usepackage{amsmath}
\usepackage{theorem}
\usepackage{epsfig}
\usepackage{subfigure}
\usepackage{imsart}
\usepackage{verbatim}
\usepackage{color}

\theorembodyfont{\slshape}
\newtheorem{theor}{Theorem}

\newtheorem{propo}[theor]{Proposition}
\newtheorem{lemma}[theor]{Lemma}

\newtheorem{defin}[theor]{Definition}

\newenvironment{proof}{\noindent{\scshape Proof.}}{\hspace{2mm} $\square$}
\newenvironment{demon}[1]{\noindent{\scshape Proof of Proposition \ref{#1}.}}{\hspace{2mm} $\square$}

\newcommand{\Z}{\mathbb{Z}}

\newcommand{\N}{\mathbb{N}}
\newcommand{\D}{\mathbb{D}}
\newcommand{\ind}{\mathbf{1}}
\newcommand{\ep}{\epsilon}

\begin{document}
\begin{frontmatter}
\title{Consensus in the two-state Axelrod model}
\runtitle  {Consensus in the two-state Axelrod model}
\author{Nicolas Lanchier\thanks{Research supported in part by NSF Grant DMS-10-05282} and
       Jason Schweinsberg\thanks{Research supported in part by NSF Grant DMS-08-05472}}
\runauthor {N. Lanchier and J. Schweinsberg}
\address   {School of Mathematical and Statistical Sciences, \\ Arizona State University, \\ Tempe, AZ 85287, USA.}
\address   {Department of Mathematics, \\ University of California, San Diego, \\ La Jolla, CA 92093-0112, USA.}

\maketitle

\begin{abstract} \ \
 The Axelrod model is a spatial stochastic model for the dynamics of cultures which, similarly to the voter model, includes
 social influence, but differs from the latter by also accounting for another social factor called homophily, the
 tendency to interact more frequently with individuals
 who are more similar.
 Each individual is characterized by its opinions about a finite number of cultural features, each of which can assume the
 same finite number of states.
 Pairs of adjacent individuals interact at a rate equal to the fraction of features they have in common, thus modeling
 homophily, which results in the interacting pair having one more cultural feature in common, thus modeling social influence.
 It has been conjectured based on numerical simulations that the one-dimensional Axelrod model clusters when the number of
 features exceeds the number of states per feature.
 In this article, we prove this conjecture for the two-state model with an arbitrary number of features.
\end{abstract}

\begin{keyword}[class=AMS]
\kwd[Primary ]{60K35}
\end{keyword}

\begin{keyword}
\kwd{Interacting particle systems, Axelrod model, annihilating random walks.}
\end{keyword}

\end{frontmatter}

\section{Introduction}

\indent There has been in the past decade a rapidly growing interest in agent-based models in an attempt to understand the
 long-term behavior of complex social systems.
 These models are characterized by heuristic rules that govern the outcome of an interaction between two agents, and a
 graphical structure, modeling either physical space or a social network, that encodes the pairs of agents that may interact due
 to, e.g., geographical proximity or friendship.
 The main objective of research in this field is to deduce the macroscopic behavior that emerges from the microscopic rules, which
 also depends on the structure of the network of interactions.
 The mathematical term for agent-based models is interacting particle systems though, as scientific fields, the former involves
 numerical simulations whereas the latter is based on rigorous mathematical analyses.
 While there is a common effort from sociologists, economists, psychologists, and statistical physicists to understand such
 models, interacting particle systems of interest in social sciences have been so far essentially ignored by mathematicians, with
 the notable exception of the voter model.
 This paper is motivated by this lack of analytical results and continues the study initiated in \cite{lanchier_2011} for one
 of the most popular models of social dynamics: the Axelrod model \cite{axelrod_1997}.
 The effort to collect analytical results is mainly justified by the fact that stochastic spatial simulations are generally
 difficult to interpret.
 This is especially true for the Axelrod model which, in contrast with the voter model, has a number of absorbing states that
 grows exponentially with the size of the network, and for which simulations of the finite system can freeze in atypical
 configurations, thus exhibiting behaviors which are not symptomatic of the long-term behavior of their infinite counterpart.

\indent The Axelrod model \cite{axelrod_1997} has been proposed by political scientist Robert Axelrod as a stochastic model for
 the dissemination of culture.
 The heuristic microscopic rules include two important social factors:
 homophily, which is the tendency of individuals to interact more frequently with individuals who are more similar, and
 social influence, which is the tendency of individuals to become more similar when they interact.
 Note that the voter model \cite{clifford_sudbury_1973, holley_liggett_1975} accounts for the latter but not for the former:
 individuals are characterized by one of two competing opinions which they update at a constant rate by mimicking one of their
 neighbors chosen uniformly at random.
 In particular, any two individuals in the voter model either totally agree or totally disagree, which prevents homophily from being incorporated in the model.
 In contrast, individuals in the Axelrod model are characterized by a vector, also called a culture, that consists of $F$ coordinates,
 called cultural features, each of which assumes one of $q$ possible states.
 Homophily can thus be naturally modeled in terms of a certain cultural distance between two individuals: pairs of neighbors interact
 at a rate equal to the fraction of features they have in common.
 Social influence is then modeled as follows:
 each time two individuals interact, one of the cultural features for which the interacting pair disagrees (if any) is chosen uniformly
 at random, and the state of one of both individuals is set equal to the state of the other individual for this cultural feature.
 More formally, the Axelrod model on the infinite one-dimensional lattice, which is the network of interactions considered in this
 paper, is the continuous-time Markov chain whose state space consists of all spatial configurations
 $$ \eta : \Z \ \longrightarrow \ \{1, 2, \ldots, q \}^F. $$
 To describe the dynamics of the Axelrod model, we let
 $$ F (x, y) \ = \ \frac{1}{F} \ \sum_{i = 1}^F \ \ind \{\eta (x, i) = \eta (y, i) \}, $$
 where $\eta (x, i)$ refers to the $i$th coordinate of the vector $\eta (x)$, denote the fraction of cultural features vertex $x$
 and vertex $y$ have in common.
 In addition, we introduce the operator $\sigma_{x, y, i}$ on the set of spatial configurations defined by
 $$ (\sigma_{x, y, i} \,\eta) (z, j) \ = \ \left\{\hspace{-4pt} \begin{array}{ll}
     \eta (y, i) & \hbox{if} \ z = x \ \hbox{and} \ j = i \vspace{2pt} \\
     \eta (z, j) & \hbox{otherwise} \end{array} \right. \quad \hbox{for} \ x, y \in \Z \ \hbox{and} \ i \in \{1, 2, \ldots, q \}. $$
 In other words, configuration $\sigma_{x, y, i} \,\eta$ is obtained from configuration $\eta$ by setting the $i$th feature of the
 individual at vertex $x$ equal to the $i$th feature of the individual at vertex $y$ and leaving the state of all the other features
 in the system unchanged.
 The dynamics of the Axelrod model are then described by the Markov generator $L$ defined on the set of cylinder functions by
 $$ Lf (\eta) \ = \ \sum_{|x - y| = 1} \ \sum_{i = 1}^F \ \frac{1}{2F} \
                    \bigg[\frac{F (x, y)}{1 - F (x, y)} \bigg] \ \ind \{\eta (x, i) \neq \eta (y, i) \} \ [f (\sigma_{x, y, i} \,\eta) - f (\eta)]. $$
 Note that the expression of the Markov generator indicates that the conditional rate at which the $i$th feature of vertex $x$ is
 set equal to the $i$th feature of vertex $y$ given that these two vertices are nearest neighbors that disagree on their $i$th
 feature can be written as
 $$ \frac{1}{2F} \ \bigg[\frac{F (x, y)}{1 - F (x, y)} \bigg] \ = \ F (x, y) \ \times \ \frac{1}{F \,(1 - F (x, y))} \ \times \ \frac{1}{2} $$
 which, as required, equals the fraction of features both vertices have in common, which is the rate at which the vertices interact, times
 the reciprocal of the number of features for which both vertices disagree, which is the probability that any of these features
 is the one chosen to be updated, times the probability one half that vertex $x$ rather than vertex $y$ is chosen to be updated.

\indent The main question about the Axelrod model is whether or not the population converges to a consensus when starting from a random configuration.
 For simplicity, we assume that the initial cultures at different sites are independent and identically distributed, and that at
 a given site, each of the $q^F$ possible initial cultures appears with the same probability.
 The term ``consensus'' is defined mathematically in terms of clustering
 of the infinite system: the model is said to cluster if
 $$ \lim_{t \to \infty} \ P \,(\eta_t (x, i) = \eta_t (y, i)) \ = \ 1 \quad \hbox{for all} \ x, y \in \Z \ \hbox{and} \ i \in \{1, 2, \ldots, q \} $$
 and is said to coexist otherwise.
 The dichotomy between clustering and coexistence for the finite model is unclear since, as mentioned above, the finite system can
 hit an absorbing state in which different cultures are present even though its infinite counterpart clusters.
 In order to characterize the transition between the two regimes, Vilone \emph{et al} \cite{vilone_vespignani_castellano_2002}
 considered the random variable $s_{\max}$ which refers to the length of the largest interval in which all individuals share the
 same culture in the absorbing state hit by the finite system, and distinguished between the two regimes depending on whether
 the expected value of this random variable scales like the population size or is uniformly bounded.
 Denoting the population size by $N$, their spatial simulations suggest that
 $$ \begin{array}{rcl}
    \lim_{N \to \infty} \ E \,(N^{-1} s_{\max}) > 0 \ \hbox{for} \ F > q & \hbox{and} &
    \lim_{N \to \infty} \ E \,(s_{\max}) < \infty   \ \hbox{for} \ F < q \end{array} $$
 so we conjecture clustering when $F > q$ and coexistence when $F < q$ for the one-dimensional infinite system.
 The mathematical analysis of the Axelrod model initiated in \cite{lanchier_2011} strongly suggests the coexistence part of the
 conjecture for a certain subset of the parameter region.
 More precisely, letting $n_c$ denote the number of cultural domains in the absorbing state hit by
 the finite system, it is proved based on duality-like techniques and a coupling with a simple urn problem that
 $$ \begin{array}{rl} \lim_{N \to \infty} \ E \,(N^{-1} \,n_c) > 0 \ \hbox{for} \ F < c \times q & \hbox{where $c \approx 0.567$ satisfies $c = e^{-c}$}. \end{array} $$
 It is also proved that the infinite system clusters in the critical case $F = q = 2$.
 In this paper, we extend this result to all values of the number of features when the number of states per feature is again equal
 to two, which proves the clustering part of the conjecture stated above when $q = 2$.
 More precisely, we have the following theorem.
\begin{theor} --
\label{thm}
 The $F$-feature 2-state Axelrod model on $\Z$ clusters, starting from the random initial configuration in which the cultures at
 different sites are independent and identically distributed and at a given site, each of the $2^F$ possible initial cultures appears with the same probability.
\end{theor}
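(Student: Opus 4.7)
For each feature $i \in \{1, \ldots, F\}$, I would introduce the \emph{boundary set}
\[ B_i(t) \ = \ \{(x, x+1) \subset \Z : \eta_t(x, i) \neq \eta_t(x+1, i)\}. \]
The first observation is that each $B_i$ evolves as a system of annihilating random walks with configuration-dependent jump rates. Indeed, an update of feature $i$ at vertex $x$ through the edge $e = (x, x+1) \in B_i$ happens at rate $F(x, x+1)/(2F(1 - F(x, x+1)))$ and has the effect of removing $e$ from $B_i$ while toggling the neighboring edge $(x-1, x)$ in or out of $B_i$. Since $q = 2$, if $(x-1, x)$ was already in $B_i$ the two boundaries annihilate, otherwise the boundary simply jumps one lattice unit to the left; the symmetric description applies on the right. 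An immediate consequence is that $|B_i|$ is non-increasing in time, so by translation invariance of the initial law the density $\rho_i(t) := P((0, 1) \in B_i(t))$ is non-increasing and converges to some limit $\rho_i(\infty) \in [0, 1/2]$.

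Clustering is equivalent to $\rho_i(\infty) = 0$ for every $i$, because
\[ P(\eta_t(x, i) \neq \eta_t(y, i)) \ \leq \ |y - x| \cdot \rho_i(t) \]
by translation invariance and a union bound over the edges joining $x$ to $y$. I would then try to prove $\rho_i(\infty) = 0$ by coupling $B_i$ with a reference annihilating random walk $\tilde B_i$ on $\Z$ run at constant rates, in such a way that $B_i(t) \subseteq \tilde B_i(t)$. Since a standard annihilating random walk on $\Z$ started from a translation invariant configuration of positive density has density going to zero, this would yield $\rho_i(t) \leq \tilde \rho_i(t) \to 0$.

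The main obstacle is precisely that the jump rate $F(x, x+1)/(2F(1 - F(x, x+1)))$ can vanish: at an edge $e$ with $F(x,x+1) = 0$, i.e., where all $F$ features disagree, no update of any feature occurs at $e$, so a boundary of $B_i$ sitting there is temporarily \emph{frozen}. A naive Harris coupling in which $\tilde B_i$ fires at the maximum possible rate will move its boundary off $e$ while the Axelrod boundary sits put, breaking the desired inclusion. Resolving this appears to require either a refined coupling that matches individual boundary particles between $B_i$ and $\tilde B_i$ (a stirring-type construction), or an alternative route in which one bounds the instantaneous annihilation rate from below by a positive function of $\rho_i$. Either strategy has to use the crucial fact that a frozen edge cannot remain frozen forever: any update at a neighboring edge changes $\eta(x, j)$ for some $j$ and so modifies $F(x, x+1)$, eventually allowing boundaries at $e$ to be annihilated. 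Quantifying this ``aggregated unfreezing pressure'' from the $F$ features is, I expect, the heart of the proof, and is where the $q = 2$ restriction is essential (since flipping a single feature at a $q = 2$ site is an involution, so toggles of boundaries are clean).

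Once $\rho_i(t) \to 0$ for each $i$, the displayed inequality combined with a union bound over the $F$ features gives $P(\eta_t(x) \neq \eta_t(y)) \to 0$, which is clustering. I would expect this concluding step to be routine; the true novelty beyond the $F = q = 2$ analysis of \cite{lanchier_2011} lies in controlling the long-term effect of edges where a large number of features disagree simultaneously, which is essentially impossible when $F = 2$ but the generic situation for large $F$.
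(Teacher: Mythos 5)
Your setup agrees with the paper's: the interfaces of each feature form $F$ non-independent systems of annihilating symmetric random walks on $\Z+1/2$, clustering is equivalent to extinction of these walks, and the obstacle is the edges where all $F$ features disagree, at which all particles are frozen. However, the proposal stops exactly where the paper's work begins, and the one concrete route you suggest is unlikely to succeed. A stochastic domination $B_i(t) \subseteq \tilde B_i(t)$ against a constant-rate annihilating random walk is not available: annihilating systems are not monotone under slowing down particles, since a frozen Axelrod boundary survives while its faster reference counterpart may move off and annihilate elsewhere, destroying the inclusion in both directions (you acknowledge this, but the ``refined stirring-type coupling'' you hope for does not exist in any standard form, precisely because the rates depend on the joint configuration of all $F$ levels). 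Moreover, treating each $B_i$ separately discards the structure the paper actually exploits: the argument must be run on the joint process $\zeta_t(u) = \sum_i \xi_t(u,i)$, because whether a site is frozen depends on all levels simultaneously, and the unfreezing mechanism rests on a parity comparison \emph{between} two different levels over an interval (the notion of an ``active'' interval).

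Concretely, the missing content is twofold. First, one needs the paper's Proposition 2: a frozen site cannot remain frozen forever. The proof is not a soft ``aggregated unfreezing pressure'' estimate but an Adelman-type reflection argument: one locates, to the right of the frozen site $u$, a block whose initial configuration is the mirror image of the current configuration near $u$ and which has seen no Poisson arrivals; by symmetry the first boundary change is at least as likely to occur at $u$ as at the mirror site, and a cross-level parity argument guarantees the interval in between contains an active particle that must eventually reach one of the two ends. Iterating over stages gives a uniformly positive conditional probability of hitting $u$ at each stage, hence almost sure unfreezing. Second, even granting this, deducing that the \emph{density} of frozen sites tends to zero requires a separate accounting argument (the paper's Lemmas 5--7): one first shows the expected number of active particles per site tends to zero by bounding the number of annihilation and freezing events per site, and then uses the unfreezing proposition to show that a positive density of frozen sites would force the total particle density to decrease by a fixed amount infinitely often, a contradiction. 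None of this is routine, and your sketch contains no substitute for either step.
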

 The proof when $F = q = 2$ is carried out in \cite{lanchier_2011} based on duality techniques for the voter model through the existence
 of a natural coupling between the two-feature two-state Axelrod model and the voter model obtained by identifying cultures with no
 feature in common.
 This coupling, however, fails for any other values of the parameters, so a different approach is needed to extend the
 result to a larger number of features.
 The first step is to construct a coupling between the two-state Axelrod model and a certain collection of non-independent systems
 of annihilating symmetric random walks that keep track of the disagreements between nearest neighbors.
 Clustering of the Axelrod model is equivalent to extinction of these systems of annihilating random walks.
 The proof of the latter is inspired by a symmetry argument introduced by Adelman \cite{adelman_1976} which is combined with certain
 parity properties of the collection of non-independent systems of random walks.

\pagebreak

\section{Systems of annihilating random walks}

\begin{figure}[t]
\centering
\mbox{\subfigure{\epsfig{figure = 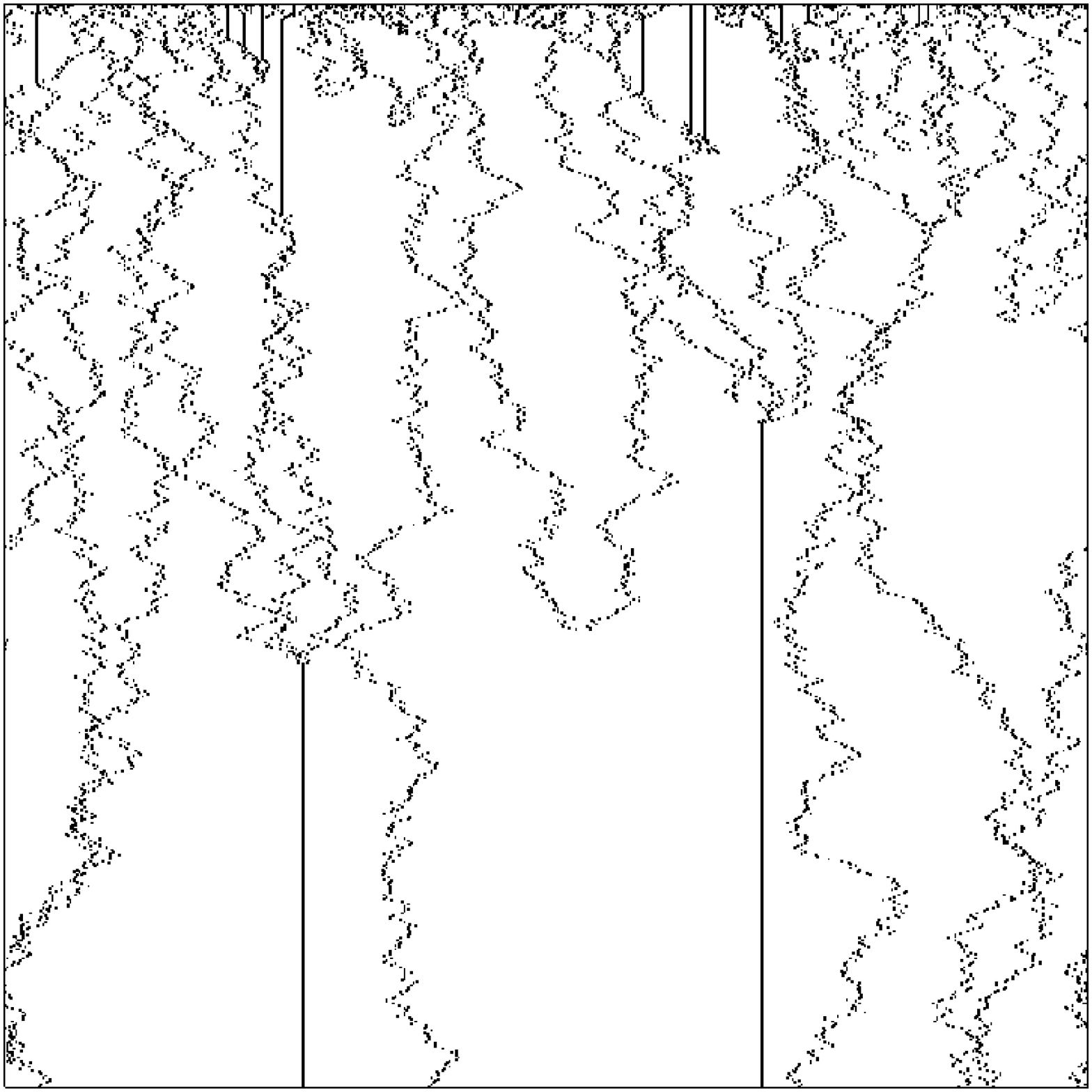, width = 210pt}} \hspace{5pt}
      \subfigure{\epsfig{figure = 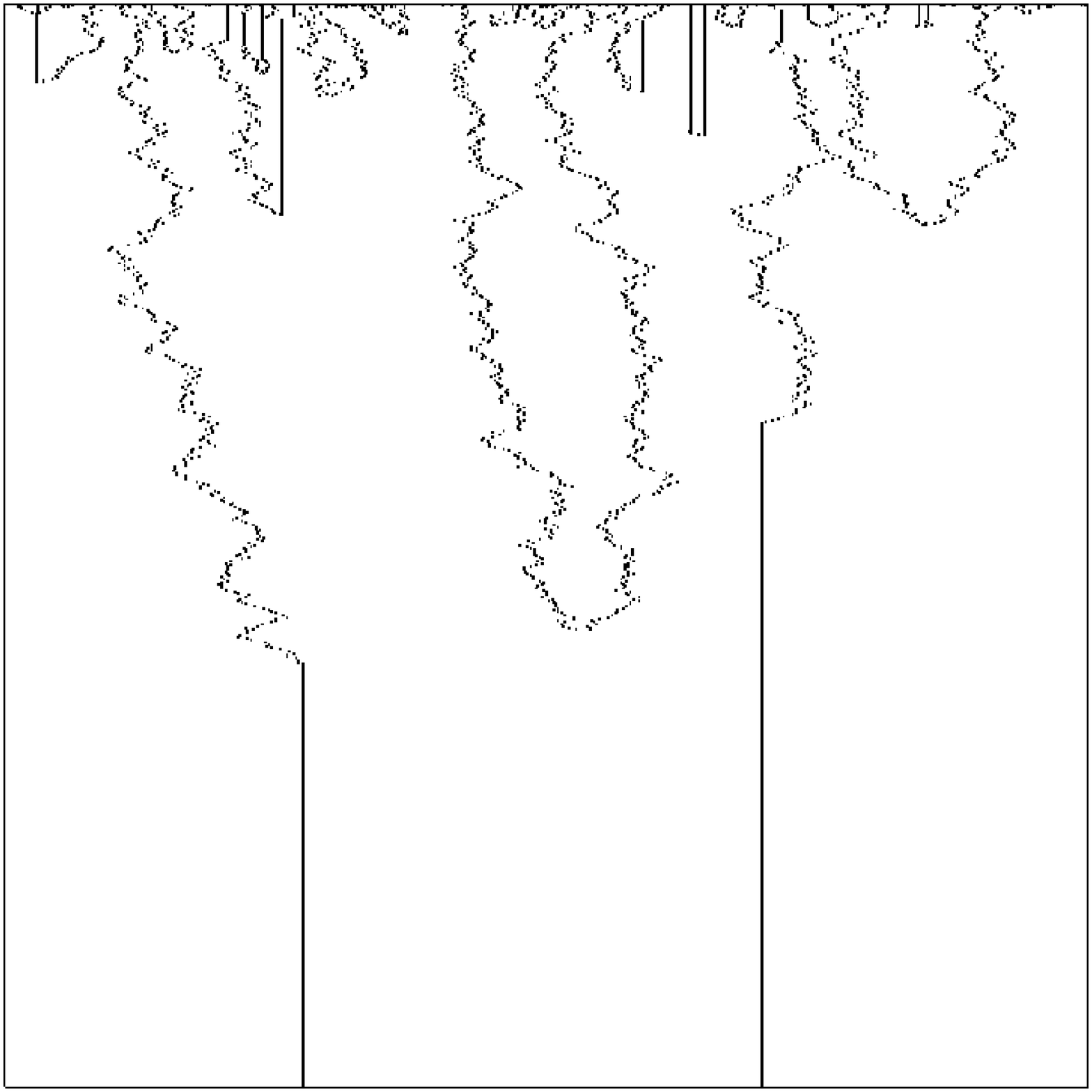, width = 210pt}}}
\caption{\upshape{Pictures of the system of coalescing random walks obtained from a realization of the two-state Axelrod model on the one-dimensional
 torus with 600 vertices with $F = 3$ features.
 The left picture gives a superposition of all three levels of random walks while the first level only is extracted in the right picture.}}
\label{fig:AxM}
\end{figure}

\indent In this section, we represent the Axelrod model on $\Z$ by a particle system that keeps track of the interfaces between cultural
 domains, thus looking at the disagreements along the edges of the graph rather than the actual cultures on the vertices.
 This approach is motivated by the fact that consensus in the Axelrod model is equivalent to the extinction of its interfaces.
 In order to obtain a well-defined Markov process, it is necessary to keep track of the features for which neighbors disagree rather
 than simply the number of these features.
 Therefore, we think of each edge of the graph as having $F$ levels, and place a particle on edge $e = (x, y)$ at level $i$ if and only
 if vertex $x$ and vertex $y$ disagree on their $i$th feature.
 That is, we define
 $$ \xi_t (u, i) \ = \ \ind \,\{\eta_t (u - 1/2, \,i) \neq \eta_t (u + 1/2, \,i) \} \quad \hbox{for all} \ u \in \D := \Z + 1/2 $$
 and place a particle at site $u \in \D$ at level $i$ whenever $\xi_t (u, i) = 1$.
 To study the dynamics of this system and the rates at which particles jump, it will be useful to also keep track of the number of particles
 per site so we introduce
 $$ \zeta_t (u) \ = \ \sum_{i = 1}^F \ \xi_t (u, i) \quad \hbox{for all} \ u \in \D $$
 and call $u$ a $j$-site whenever it has a total of $j$ particles: $\zeta_t (u) = j$.
 To understand the evolution, we first observe that when a particle jumps, it moves right or left with equal probability, unless another particle
 already occupies the site on which the particle tries to jump in which case the particles annihilate each other. 
 Thus, these processes induce a collection of $F$ non-independent systems of annihilating symmetric random walks.
 The symmetry is due to the fact that, when two neighbors interact, each of them is equally likely to be the one chosen to be updated.
 Also, the reason why a collision between two particles results in an annihilation is
that when two individuals disagree with a third one on
 a given feature, these two individuals must agree on this feature, which happens when $q = 2$.
(Note however that for larger values of $q$ collisions between particles would result in either an annihilating event or a coalescing event
 depending on the configuration of the underlying Axelrod model.)
 Even though the evolution of the particle system at a single level is somewhat reminiscent of the evolution of the interfaces of the
 one-dimensional voter model, it is in fact much more complicated due to the presence of strong dependencies among the different levels.
 These dependencies result from the inclusion of homophily in the model, which implies that particles jump at varying rates.
 More precisely, since two adjacent vertices that disagree on exactly $j$ of their features interact at rate $1 - j/F$, the fraction of
 features they share, and the site between these vertices is a $j$-site, given that $u$ is a $j$-site each particle at site $u$ jumps at rate
 $$ r (j) \ = \ \frac{1}{j} - \frac{1}{F} \quad \hbox{for} \ j \neq 0. $$
 This indicates that the motion of the particles is slowed down at sites that contain more particles, with the dynamics being frozen at sites
 with $F$ particles.
In the following sections, we will call particles frozen or active depending on whether these particles are
 located at an $F$-site or not, respectively.
 We refer the reader to Figure \ref{fig:AxM} for simulation pictures of the systems of annihilating random walks when $F = 3$.

\section{Showing that the process can not become frozen}

\indent In this section, we prove that no site can remain an $F$-site forever, which is the key to proving consensus in the two-state
 Axelrod model.
 From the point of view of the particle system described in the previous section, this means that if some site $u$ is completely filled
 with particles at some time, then eventually another particle will jump onto the site $u$, annihilating one of the frozen particles
 on the site and making the other $F - 1$ particles active.

\begin{propo} --
\label{prop:symmetry}
 Assume that site $u \in \D$ is a $F$-site at time $t$. Then,
 $$ T \ := \ \inf \,\{s > t : \zeta_s (u) \neq F \} \ < \ \infty \quad a.s. $$
\end{propo}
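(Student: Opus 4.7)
The plan is to argue by contradiction: suppose $P (T = \infty) > 0$ and let $E = \{\zeta_s (u) = F \text{ for all } s \geq t\}$. On $E$, since $r (F) = 0$ no particle leaves $u$, and any particle arriving at $(u, i)$ from a neighbor would annihilate the resident level-$i$ particle at $u$ and force $\zeta_s (u) < F$, contradicting $E$; hence on $E$ no particle ever crosses either bond adjacent to $u$ after time $t$, and the half-lines $\{w < u\}$ and $\{w > u\}$ evolve independently. Since the iid Axelrod initial condition makes $\{\zeta_s\}$ translation invariant and ergodic, $P (E) > 0$ implies a positive density of ``forever-$F$-sites'' almost surely. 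The global alternating culture (every bond being an $F$-site forever) has probability zero under the iid law, so the forever-$F$ set cannot fill $\D$, and a.s.\ there exist adjacent forever-$F$-sites $v < v'$ with a non-trivial gap, i.e.\ $v' \geq v + 2$ with no forever-$F$-site strictly between them.

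The first ingredient is a parity identity for such a pair. For each level $i$ and $s \geq s_0$, where $s_0$ is the joint freezing time of $v$ and $v'$, the count $M_i (s) = \sum_{w = v}^{v'} \xi_s (w, i)$ is conserved modulo $2$: annihilations in the gap change it by $\pm 2$, and no particle crosses the boundary of $[v, v']$ on $E$. Telescoping the definition of $\xi$ yields the identity
$$ M_i (s) \bmod 2 \ = \ \eta_s (v - 1/2, i) \oplus \eta_s (v' + 1/2, i), $$
and passing to the equilibrium of the dynamically isolated gap (which reaches a $\{0, F\}$-valued frozen state in finite time) gives $M_i (\infty) = 2 + N_F$ with $N_F$ independent of $i$. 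Hence for all $s \geq s_0$ the right-hand XOR has the same value at every level, so $\eta_s (v - 1/2)$ and $\eta_s (v' + 1/2)$ remain componentwise equal or componentwise opposite throughout $[s_0, \infty)$.

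The second ingredient is the Adelman-type symmetry. Because the two half-lines decouple on $E$, the cultures $\eta_s (v - 1/2)$ and $\eta_s (v' + 1/2)$ are updated by independent Poisson clocks, so the XOR-constancy forces neither culture ever to change on $[s_0, \infty)$; in turn this forces $\zeta_s (v - 1) \in \{0, F\}$ and $\zeta_s (v' + 1) \in \{0, F\}$ for all $s \geq s_0$, meaning each of those bonds is itself either forever-$F$ or forever-empty from $s_0$. Iterating the same analysis along both half-lines -- and into the gap via the shorter adjacent pair $(v - 1, v)$ once $v - 1$ is added to the forever-$F$ block -- one ends up forcing every bond of $\D$ to have $\zeta_s \in \{0, F\}$ for $s \geq s_0$, i.e.\ a globally frozen Axelrod configuration. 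Under a translation invariant ergodic law whose initial density of active bonds is $1 - 2^{1 - F} > 0$ and whose dynamics remain non-degenerate at all finite times, such a global freeze in finite time has probability zero, which is the desired contradiction.

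The main obstacle is the iteration. The $F$ levels of annihilating walks are coupled through the jump rate $r (\zeta_s (\cdot))$, so although the parity identity is level-wise the dynamical correlations are global, and Adelman's symmetry must be adapted to respect this coupling. A further subtlety is that the constraint $\zeta_s (v - 1) \in \{0, F\}$ admits a ``forever-empty'' branch in addition to ``forever-$F$,'' and both branches must be ruled out uniformly along the outward and inward iteration. Making this global propagation rigorous, and extracting the probability-zero global-freeze conclusion from it, is where the bulk of the technical work lies.
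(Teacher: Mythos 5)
Your reduction is genuinely different from the paper's, and several of its ingredients are sound: using translation invariance and ergodicity to upgrade $P(E)>0$ to a positive density of forever-$F$-sites is a nice move the paper does not make, and the parity observation is correct --- on the event that $v$ and $v'$ are both forever frozen after $s_0$, each $M_i(s) \bmod 2$ is conserved, so the XOR $\eta_s(v-1/2,i)\oplus\eta_s(v'+1/2,i)$ is constant in time, and since the two boundary vertices are updated by disjoint Poisson clocks that almost surely never ring simultaneously, neither boundary culture can ever change. The outward propagation can also be repaired (the correct conclusion at each step is that $\zeta_s(v-1)$ is \emph{eventually} confined to $\{0,F\}$, not for all $s\ge s_0$, and the freezing times grow with the distance from $v$, so what you actually obtain is a pointwise-eventual freeze rather than a globally frozen configuration at a single finite time).

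The genuine gap is the final sentence. ``Such a global freeze has probability zero'' is asserted, not proved, and it is essentially the proposition itself in disguise: the whole difficulty is precisely to rule out the process converging to a frozen configuration with a positive density of $F$-sites, and nothing in your argument excludes it. The limiting configuration your iteration produces --- blocks of constant culture separated by $F$-bonds, adjacent blocks complementary --- is a perfectly consistent absorbing state, its particle density $\rho F$ is compatible with the monotone decrease from the initial density $F/2$ whenever $\rho\le 1/2$, and ``the dynamics remain non-degenerate'' is false exactly at such configurations. So you have reduced ``one bond freezes forever with positive probability'' to ``the whole line freezes with positive probability'' without supplying the mechanism that kills the latter. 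You invoke Adelman's symmetry by name, but the independent-clocks argument you give in its place is not it. The paper's proof supplies exactly the missing quantitative mechanism: it truncates to a half-line, defines $p$ as the probability that a fully occupied boundary edge is never hit, proves (Lemma \ref{lem:break}) that an interval whose level counts have mismatched parities must eventually disturb one of its frozen endpoints, and then runs a staged construction (Lemma \ref{lem:symmetry}) that locates, in the fresh initial data to the right, a block whose configuration is the \emph{mirror image} of the configuration near $u$; reflection symmetry then forces the first boundary disturbance to occur at $u$ with probability $1/2$, yielding a conditional unfreezing probability of at least $p/8$ at every stage, whence $\bigcup_k A_k$ has probability one once $p>0$ is extracted from the contradiction hypothesis. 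Some version of this reflection argument (or another quantitative lower bound on the rate at which a frozen site is attacked) is indispensable, and it is exactly what your proposal is missing.
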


\noindent It will be useful in the proof of the proposition to construct graphically the particle system described in the previous
 section using the following collections of independent Poisson processes and random variables:
 for each pair of site and feature $(v, i) \in \D \times \{1, 2, \ldots, F \}$,
\begin{itemize}
 \item we let $(N_{v, i} (t) : t \geq 0)$ be a rate one Poisson process, \vspace{4pt}
 \item we denote by $T_{v, i} (n)$ its $n$th arrival time: $T_{v, i} (n) = \inf \,\{t : N_{v, i} (t) = n \}$, \vspace{4pt}
 \item we let $(B_{v, i} (n) : n \geq 1)$ be a collection of independent Bernoulli variables with
  $$ P \,(B_{v, i} (n) = + 1) \ = \ P \,(B_{v, i} (n) = - 1) \ = \ 1/2, $$
 \item and we let $(U_{v, i} (n) : n \geq 1)$ be a collection of independent Uniform (0, 1) random variables.
\end{itemize}
 The system of annihilating random walks is constructed as follows.
 At time $t = T_{v, i} (n)$, we draw an arrow labeled $i$ from site $v$ to site $v + B_{v, i} (n)$ to indicate that if
 $$ \xi_{t-} (v, i) \ = \ 1 \quad \hbox{and} \quad U_{v, i} (n) \ \leq \ r (\zeta_{t-}(v)) \ = \ \frac{1}{\zeta_{t-}(v)} - \frac{1}{F} $$
 then the particle at site $v$
at level $i$ jumps to site $v + B_{v, i} (n)$.

\indent The above construction can be extended naturally to any subgraph $G$ of the lattice by using the same collections of independent
 Poisson processes and random variables and killing all the particles that jump onto a site $v$ which is not the center of an edge of the graph.
 Consider now the case in which the graph $G$ is the one induced by the vertex set $\N$.
 Suppose the initial configuration is such that the left-most edge has $F$ particles, one at every level, while every level of every
 other edge independently has a particle at time zero with probability $1/2$. Let
 $$p \ := \ P \,(\hbox{the left-most edge has $F$ particles at all times}). $$
 That is, $p$ is the probability that no particle ever tries to jump onto one of the particles on the left-most edge.
 We will later see that $p = 0$.

\indent Returning now to the setting of Proposition \ref{prop:symmetry}, fix a site $u \in \D$ and a time $t > 0$, and suppose that
 particles at that space time point are frozen: $\zeta_t (u) = F$.
 We will consider only the sites to the right of $u$ and show that eventually some particle from the right must jump onto $u$,
 unless the site $u$ has already been hit from the left.
 Lemma \ref{lem:symmetry} below will allow us to break the process into stages, and give a lower bound for the probability that a particle
 at $u$ is annihilated at each stage.
 We start by proving Lemma \ref{lem:break} which is a key preliminary result.

\begin{defin} --
\label{def:active}
 The interval $\{u, u + 1, \ldots, v \} \subset \D$ is said to be active at time $t$ if the numbers of particles it contains at two
 different levels differ in their parity, i.e.,
 $$ \sum_{w = u}^v \ \xi_t (w, i) \ \neq \ \sum_{w = u}^v \ \xi_t (w, j) \mod 2 \quad\hbox{for some} \ i \neq j. $$
\end{defin}

\begin{lemma} --
\label{lem:break}
 Assume that $\{u, \ldots, v \}$ is active at time $t$ and $\zeta_t (u) = \zeta_t (v) = F$. Then,
 $$ T \ := \ \inf \,\{s > t : \zeta_s (u) + \zeta_s (v) \neq 2 F \} \ < \ \infty \quad a.s. $$
\end{lemma}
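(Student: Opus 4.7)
The plan is to argue by contradiction, showing $P(A) = 0$ where $A := \{T = \infty\}$. On $A$, both endpoints remain $F$-sites for all $s \geq t$, so they are frozen (rate $r(F) = 0$) and no particle can jump from the interior onto $u$ or $v$, since such a jump would annihilate the matching-level endpoint particle and drop $\zeta$ below $F$, contradicting $A$. Thus on $A$ the interior $\{u+1, \ldots, v-1\}$ evolves as an autonomous finite system of annihilating random walks with no boundary crossings, and each parity $P_k(s) := \sum_{w=u}^v \xi_s(w, k) \bmod 2$ is conserved. In particular, the activity hypothesis $P_i \neq P_j$ for some $i \neq j$ persists for all $s \geq t$.

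The total interior count is non-increasing on $A$ (interior-to-interior jumps either preserve it or annihilate two particles at once), so it stabilizes at some $N_\infty$ after an a.s.\ finite random time $\tau$. After $\tau$ no annihilation occurs, which forces at most one particle per level in the interior: two same-level particles on a finite interval with uniformly positive jump rates and unbiased direction would collide and annihilate almost surely. Writing $c_k \in \{0, 1\}$ for these post-$\tau$ interior counts and noting that the two endpoints together contribute $2$ to each level-$k$ total, conservation gives $c_k \equiv P_k \pmod 2$, i.e.\ $c_k = P_k$. Activity then forces some $c_{k_0} = 1$ while some $c_{k_1} = 0$, so strictly fewer than $F$ distinct levels are represented in the interior. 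Consequently $\zeta_s(w) < F$ at every interior site for every $s \geq \tau$, and every interior particle jumps at rate at least $r(F - 1) = 1/(F(F - 1)) > 0$.

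Track now the unique level-$k_0$ particle for $s \geq \tau$. Its jump directions come from the unbiased Bernoullis $B_{\cdot, k_0}(\cdot)$, so its embedded jump chain is a simple symmetric random walk on $\{u + 1, \ldots, v - 1\}$, absorbed the instant it steps onto $u$ or $v$. Such a walk exits a finite interval in a.s.\ finitely many steps, and the uniform lower bound $r(F - 1)$ on the continuous-time jump rate converts this into a.s.\ finite exit time in real time. But any such exit is a jump onto $u$ or $v$, which annihilates a matching-level endpoint particle and drops $\zeta(u)$ or $\zeta(v)$ below $F$, contradicting $A$. Hence $P(A) = 0$ and $T < \infty$ a.s.

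The step I expect to require the most care is turning the two qualitative ``eventually in finite time'' claims — that same-level interior particles annihilate, and that the tracked level-$k_0$ particle reaches the boundary — into quantitative statements despite the configuration-dependent jump rates. Both rely on the fact that the post-$\tau$ interior dynamics is a continuous-time Markov chain on a finite state space with individual jump rates uniformly bounded below by $r(F - 1)$; a finite minimization over configurations then yields uniform lower bounds on the probability of a prescribed sequence of jumps within a unit of time, after which Borel--Cantelli closes the argument.
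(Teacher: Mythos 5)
Your overall strategy --- contradiction, conservation of the per-level parities on $\{u,\dots,v\}$, persistence of activity, and a forced boundary hit --- is the same as the paper's, but one intermediate step is circular. To show that after the stabilization time $\tau$ the interior carries at most one particle per level, you assert that two same-level interior particles ``with uniformly positive jump rates'' must collide, and in your closing paragraph you justify the uniform lower bound $r(F-1)$ on the rates by the fact that no interior site is an $F$-site. But that fact is exactly what you later derive \emph{from} the one-particle-per-level conclusion (via $c_{k_1}=0$ for some level $k_1$). Before that conclusion is available, nothing rules out interior $F$-sites, at which particles jump at rate $r(F)=0$; for instance, two level-$k$ particles could both sit at interior $F$-sites and be frozen there, so the claim that they ``would collide and annihilate almost surely'' is unsupported at the point where you invoke it. This is a genuine gap, not merely a presentational one.

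The repair is the paper's (shorter) argument, which can replace the entire $\tau$ / one-per-level detour: on $\{T=\infty\}$ activity persists for all $s\ge t$, and if every site of $\{u,\dots,v\}$ were a $0$-site or an $F$-site then all $F$ level-sums over the interval would be equal, contradicting activity; hence at every time some site of the interval holds between $1$ and $F-1$ particles, i.e.\ the interval always contains an active particle jumping at rate at least $r(F-1)>0$. Since the interior configuration lives in a finite state space, in each unit time interval there is a probability bounded below that some particle reaches $u$ or $v$, and a Borel--Cantelli/geometric-trials argument (of the very kind you sketch in your last paragraph) forces a boundary hit in a.s.\ finite time, contradicting $T=\infty$. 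The other ingredients of your write-up --- the autonomy of the interior on $\{T=\infty\}$, the parity conservation, and the annihilation upon hitting an endpoint --- are correct and match the paper.
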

\begin{proof}
 Seeking a contradiction, we assume that $\zeta_s (u) = \zeta_s (v) = F$ for all $s > t$.
 Under this assumption, the parity of the number of particles at each level between site $u$ and site $v$ is preserved since particles
 annihilate by pair.
 In particular, $\{u, \ldots, v \}$ is active at every later time, which implies that this interval contains at least one site which is
 neither a 0-site nor an $F$-site, and thus at least one active particle, at every later time.
 Since this particle jumps at a positive rate, it must hit one of the boundaries $u$ or $v$ in an almost surely finite time, which leads
 to a contradiction.
\end{proof}

\begin{lemma} --
\label{lem:symmetry}
 There exists a sequence of times $t = t_0 < t_1 < t_2 < \dots < \infty$ such that, if $A_k$ denotes the event that at some time
 $s \in (t_{k - 1}, t_k]$, a particle at site $u$ is annihilated, then
\begin{equation}
\label{Aiprob}
 P \,\bigg(A_k \ \Big| \ \bigcap_{j = 1}^{k - 1} \,A_k^c \bigg) \ \geq \ \frac{p}{8} \quad \hbox{for all} \ k > 0.
\end{equation}
\end{lemma}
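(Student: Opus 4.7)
The plan is to construct the times $t_k$ adaptively and to bound $P(A_k \mid \bigcap_{j<k} A_j^c)$ from below by combining Lemma \ref{lem:break} with a coupling between the process restricted to the right of $u$ and the reference half-line process whose freezing probability is $p$.

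At each stage $k$, conditionally on the past and on $\bigcap_{j<k} A_j^c$, we have $\zeta_{t_{k-1}}(u) = F$. I would pick a random ``right barrier'' $v_k > u$, namely an $F$-site chosen so that the interval $\{u, u+1, \ldots, v_k\}$ is active in the sense of Definition \ref{def:active}. The existence of such a $v_k$ at bounded distance from $u$ (with positive probability, uniform in $k$) should follow from the parity structure of the particle system together with the fact that the initial distribution makes $F$-sites and active intervals appear with positive density. The factor $p$ in the bound arises from the following coupling: sampling independently of $\mathcal{F}_{t_{k-1}}$ an iid Bernoulli$(1/2)$ configuration $Y$ on the sites to the right of $v_k$, I would drive a coupled half-line process $\tilde\xi$ starting at time $t_{k-1}$ from ``$F$ particles at $v_k$ and $Y$ to its right'' using the same graphical construction as $\xi$. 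By translation invariance of the graphical construction and the definition of $p$, the probability that $v_k$ is never disturbed from its right in $\tilde\xi$ is exactly $p$, and a finite-speed-of-propagation argument transfers this freezing event to the actual process $\xi$ at a cost absorbed into the remaining constants. On this event, the dynamics in $\{u, \ldots, v_k\}$ evolves in isolation, and Lemma \ref{lem:break} yields that $\zeta_s(u) + \zeta_s(v_k) < 2F$ at an almost-surely finite time $T'$ at which either $u$ or $v_k$ loses its $F$-status.

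The remaining factor $1/8$ decomposes into a small number of independent factors of $1/2$. One such factor ensures that $T'$ falls within the stage window $(t_{k-1}, t_k]$, obtained by choosing the stage length proportional to the diffusive scale $(v_k - u)^2$. A second factor, in the spirit of Adelman's symmetry, ensures that the lost site is $u$ rather than $v_k$: randomly reflecting the graphical construction across the midpoint of $[u, v_k]$ with probability $1/2$ distributionally symmetrizes the interior dynamics, yielding probability at least $1/2$ that it is $u$ that is hit. A third factor absorbs the asymmetry of the initial interior configuration (which prevents a direct application of Adelman's symmetry) and the mismatch cost in the coupling between $\xi$ and $\tilde\xi$ on a bounded window to the right of $v_k$.

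The main obstacle is the joint balance in the choice of $v_k$: it must be close enough to $u$ to keep the Lemma \ref{lem:break} hitting time short, yet far enough and sufficiently generic that an active interval exists and that the coupling with $\tilde\xi$ is exploitable. I would handle this by choosing $v_k$ adaptively as the nearest $F$-site to the right of $u$ for which $\{u, \ldots, v_k\}$ is active, and by calibrating the stage length $t_k - t_{k-1}$ to this choice, so that all the $1/2$-factors above can be collected with constants independent of $k$. The Adelman reflection needs to be applied only to the interior of $[u, v_k]$ so that it commutes with the right-side freezing coupling, which is the most delicate bookkeeping in the argument.
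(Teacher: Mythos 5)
There is a genuine gap, and it sits exactly at the point you try to wave away with your ``third factor.'' The heart of the paper's argument is Adelman's device: one searches the \emph{initial} (time-zero) data in distant blocks $B_{j,k}$ that have seen no Poisson arrivals by time $t_{k-1}$ for an exact translate or exact mirror image of the current configuration on $\{u,\dots,v_k+1\}$. Because the initial configuration is i.i.d.\ and infinitely many such untouched blocks exist, a matching block is found almost surely, and by symmetry of the i.i.d.\ law the match is a reflection rather than a translation with probability exactly $1/2$. On the reflection event the configuration on $[u,r_k]$ at time $t_{k-1}$ is \emph{exactly} symmetric under the mirror map, and only then does the conclusion ``the first change at $\{u,r_k\}$ occurs at $u$ with probability at least $1/2$'' follow, since both the configuration and the law of the graphical construction are invariant under the reflection. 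Your proposal instead randomly reflects the graphical construction across the midpoint of $[u,v_k]$: that symmetrizes the dynamics (which are already reflection-invariant in law) but does nothing about the asymmetry of the configuration itself, and an asymmetric configuration can make $v_k$ overwhelmingly more likely to be hit first. There is no uniform constant into which this asymmetry can be ``absorbed,'' so the factor you need cannot be recovered this way.

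A second, related problem is your choice of $v_k$ as the nearest $F$-site to the right of $u$ with $\{u,\dots,v_k\}$ active. At time $t_{k-1}$, conditionally on $\bigcap_{j<k}A_j^c$, there is no guarantee that such a site exists at all, let alone at a bounded distance with probability uniform in $k$; the conditioning and the evolution both distort the law of the configuration, and $F$-sites may be arbitrarily rare. The paper sidesteps this entirely: the right barrier is $r_k$, the right edge of the matched block, which is automatically an $F$-site at time $t_{k-1}$ because it is the mirror image of $u$ and the block has been untouched; activity of the interval is obtained with probability $1/2$ from the fresh Bernoulli levels at $v_k+1$ and $\ell_k+1$ (Definition \ref{def:active}), not from an assumed density of active intervals. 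Finally, your diffusive-scale truncation of the stage is unnecessary and unsupported by Lemma \ref{lem:break}, which gives only almost-sure finiteness; the paper simply defines $t_k$ to be the (a.s.\ finite) first time $u$ or $r_k$ changes on the good event, so no quantitative hitting-time estimate is needed. The coupling idea that produces the factor $p$ is the one part of your outline that matches the paper, but it must be applied at $r_k$ on the reflection event, where $r_k$ is known to be frozen, rather than at an assumed $F$-site $v_k$.
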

\begin{proof}
 The proof relies in part on delicate symmetry arguments and is modeled after the construction of Adelman \cite{adelman_1976}.
 We refer the reader to Figure \ref{fig:symmetry} for a picture of this construction in our context.
 We must analyze in detail the process at each stage.
 For $w \in \D$, $w > u$, let
\begin{align}
 {\cal F}_w (t) \ = \ \sigma \,((N_{v, i} (s) & : 0 \leq s \leq t), (B_{v,i}(n) : 1 \leq n \leq N_{v, i} (t)), \nonumber \\
  & (U_{v, i} (n) : 1 \leq n \leq N_{v, i} (t)) : u \leq v \leq w, \,1 \leq i \leq q) \nonumber
\end{align}
 be the $\sigma$-field generated by the graphical representation of the systems of random walks over the spatial interval
 $\{u, u + 1, \dots, w \}$ through time $t$.
 We will prove (\ref{Aiprob}) by induction on $k$.
 As part of the induction hypothesis, we will assume that at the beginning of the $k$th stage of the construction, we have a
 site $v_k$ such that the following two conditions hold:
\begin{enumerate}
 \item[H1.] $N_{v_k, i} \,(t_{k - 1}) = N_{v_k + 1, i} \,(t_{k - 1}) = N_{v_k + 2, i} \,(t_{k - 1}) = 0$ for $i = 1, 2, \dots, F$, \vspace{4pt}
 \item[H2.] $A_1, A_2, \dots, A_{k-1} \in {\cal F}_{v_k}(t_{k-1})$.
\end{enumerate}
 For $k = 1$, condition H2 is trivial while H1 can be satisfied by choosing $t_0 = t$ and
 $$ v_1 \ = \ \min \,\{v > u : N_{v, i} (t) = N_{v + 1, i} (t) = N_{v + 2, i} (t) = 0 \ \hbox{for} \ i = 1, 2, \dots, F \}. $$
 To prove \eqref{Aiprob} by induction, we will show that
\begin{equation}
\label{AkFprob}
 P \,(A_k \,| \,{\cal F}_{v_k} (t_{k - 1})) \ \geq \ \frac{p}{8}
\end{equation}
 and that $t_k$ and $v_{k + 1}$ can be chosen to satisfy conditions H1 and H2.

\indent Condition H1 above implies that, in the graphical representation, there is no arrow starting at either site $v_k$ or site $v_k + 1$
 by time $t_{k - 1}$ from which it follows that
\begin{itemize}
 \item particles starting to the right of $v_k$ do not reach $v_k$ by time $t_{k - 1}$, and \vspace{4pt}
 \item particles starting in $\{u, u + 1, \dots, v_k \}$ do not reach $v_k + 1$ until after time $t_{k - 1}$.
\end{itemize}
 Note that the assumption $N_{v_k + 2, i} \,(t_{k - 1}) = 0$ is not necessary at this stage of the proof but it will be useful later in the
 proof to obtain a lower bound of the probability that a certain interval is active.
 We will partition the region to the right of site $v_k + 2$ into intervals of equal length, each containing a total of $v_k - u + 3$ sites.
 Specifically, for each positive integer $j$, we let
 $$ B_{j, k} = \{j v_k - (j - 1) u + 3j, \dots, (j + 1) v_k - ju + 3j + 2 \}. $$
 Also, we define the interval $B_{0, k} = \{u, u + 1, \ldots, v_k + 2 \}$ as depicted in Figure \ref{fig:symmetry}.
 Now let $J_k$ be the smallest positive integer $j$ such that the following two conditions hold:
\begin{itemize}
\item we have $N_{w, i} (t_{k - 1}) = 0$ for all $w \in B_{j, k}$ and $i = 1, 2, \dots, F$, and \vspace{4pt}
\item we have either
 $$ \begin{array}{rcl}
    \hbox{translation}   & : & \xi_0 (j v_k - (j - 1) u + 3j + m, i) \ = \ \xi_{t_{k - 1}} (u + m, i) \vspace{4pt} \\
    \hbox{or reflection} & : & \xi_0 ((j + 1) v_k - ju + 3j + 2 - m, i) \ = \ \xi_{t_{k - 1}} (u + m, i) \end{array} $$
 for $m = 0, 1, \dots, v_k - u + 1$ and $i = 1, 2, \dots, F$.
\end{itemize}
 The second requirement ensures that either the configuration in $B_{J_k, k}$ excluding its right-most site at time zero is the same as the
 configuration between $u$ and $v_k+1$ at time $t_{k - 1}$, or else the configuration in $B_{J_k, k}$ excluding its left-most site at time zero is
 the mirror image of the configuration between $u$ and $v_k+1$ at time $t_{k - 1}$.
 In the latter case, we say that reflection occurs, an event which, due to obvious symmetry, has probability $1/2$.

\indent We now consider the process conditioned on the event $J_k = j$ as well as the event that reflection occurs.
 We also, for now, truncate the process at the right edge of $B_{j, k}$, so the right-most site is
 $$ r_k \ := \ (j + 1) \,v_k - ju + 3j + 2 $$
 and just consider the evolution of the process between the sites $u$ and $r_k$. Let
 $$ \ell_k \ := \ j \,v_k - (j - 1) \,u + 3j $$
 be the left-most site in $B_{j, k}$.
 We make the following observations:
\begin{itemize}
\item Because no particles in the interval $B_{j, k}$ can jump before time $t_{k - 1}$, the left-most and the right-most $v_k - u + 1$ sites are mirror
 images of each other at time $t_{k - 1}$, i.e.,
 $$ \xi_{t_{k - 1}} (u + m, i) \ = \ \xi_{t_{k - 1}} (r_k - m, i) \quad \hbox{for} \ \ m = 0, 1, \dots, v_k - u + 1 \ \hbox{and all} \ i. $$
\item Because particles at the sites $v_k$, $v_k + 1$, $v_k + 2$, $\ell_k$, and $\ell_k + 1$ do not jump before time $t_{k-1}$, conditional on the configuration at
 the left-most and the right-most $v_k - u + 1$ sites at time $t_{k - 1}$, the sites in between evolve independently from time $0$ until time $t_{k-1}$.
 The law of their evolution is the same as the law of the original process, modified so that particles at the first two sites and the last two sites are not
 permitted to jump and conditioned on the blocks $B_{i, k}$ for $i = 1, \dots, j-1$ failing to satisfy the conditions of either translation or reflection.
 As a result of the way $J_k$ is defined, the law of this process is therefore the same as the law of its mirror image. \vspace{4pt}
\item Because particles at site $v_k + 1$ and site $\ell_k + 1$ have no opportunity to interact with other particles before time $t_{k - 1}$, on these
 sites at each level there is a particle independently with probability $1/2$.
 Consequently, the probability that the number of particles at two given levels of these two sites have the same parity is equal to $1/2$.
\end{itemize}
By the last observation above, if a reflection occurs, then the interval from $u$ to $r_k$ is active in the sense
 of Definition \ref{def:active} with probability at least $1/2$.
 Furthermore, by the symmetry noted in the second observation above, the first change either to site $u$ or site $r_k$ happens to site $u$ with
 probability $1/2$.
 Hence, if $D_k$ denotes the event that eventually there is a change to site $u$, then
 $$ P \,(D_k \,| \,{\cal F}_{v_k} (t_{k - 1})) \ \geq \ \bigg(\frac{1}{2} \bigg)^3 \ = \ \frac{1}{8}. $$
 To complete the construction, we let $t_k = t_{k - 1}$ in case a reflection does not occur or the interval from $u$ to $r_k$ is not active.
 Otherwise, we define time $t_k$ to be the first time at which there is a change either to site $u$ or to site $r_k$ in this system.

\indent We now return to the case in which only the sites to the left of $u$ are discarded, which means there is a possibility that some particle could
 jump onto $r_k$ from the right.
 In this case, conditional on the event of a reflection, which implies that the particles at $r_k$ are frozen until at least $t_k$, and thinking of
 site $r_k$ as the left-most edge in the definition of $p$, we see that there is a probability larger than $p$ that no particle will jump from $r_k + 1$
 to $r_k$ before time $t_k$.
 We deduce that there is probability at least $p/8$ that site $u$ will change by time $t_k$, which gives \eqref{AkFprob}.
 Finally, we let
 $$ v_{k + 1} \ = \ \min \,\{j > r_k : N_{v, i} (t_k) = N_{v + 1,i} (t_k) = N_{v + 2, i} (t_k) = 0 \ \hbox{for} \ i = 1, 2, \dots, F \}. $$
 Because no particle to the right of $v_{k + 1}$ can reach $v_k$ by time $t_k$, we have $A_k \in {\cal F}_{v_{k + 1}} (t_k)$, which completes the
 proof of the lemma.
\end{proof} \\

\begin{figure}[t!]
 \centering
 \scalebox{0.55}{\input{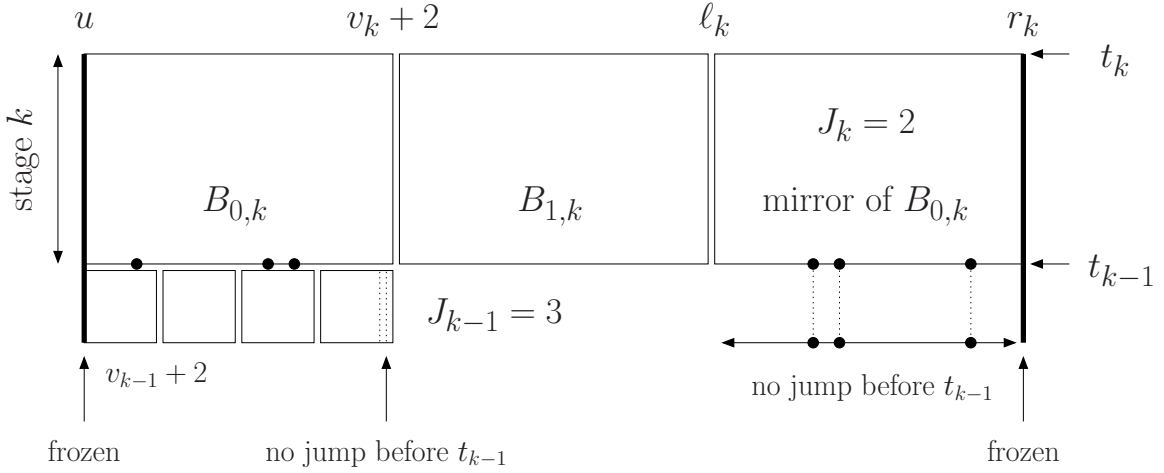}}
 \caption{\upshape{Picture related to the proof of Lemma \ref{lem:symmetry}.}}
\label{fig:symmetry}
\end{figure}

\begin{demon}{prop:symmetry}
 Consider the system in which sites to the left of $u$ are removed.
 We proceed by contradiction by assuming that there is a positive probability that the site $u$ never changes after time $t$.  We will show that this
 implies that $p > 0$.
 Thus, \eqref{Aiprob} will imply that
 $$ P \,\bigg(\bigcup_{k = 1}^{\infty} \ A_k \bigg) \ = \ 1 $$
 which means that with probability one, eventually some particle at site $u$ will be annihilated.
 This contradiction will imply the result.

\indent In the case $t = 0$, the probability that the site $u$ never changes after time $t$ is exactly $p$, by definition.
 We still need to show that if site $u$ becomes an $F$-site at $t > 0$, then having a positive probability that the site never changes after
 time $t$ would still imply that $p > 0$.
 The strategy will be to argue that with positive probability, up to time $t$ the process behaves within a finite region in such a way that makes
 it possible to reduce to the $t = 0$ case.

\indent Because, with probability one, there are infinitely many sites $v > u$ such that
 $$ N_{v, i} (t) \ = \ N_{v + 1, i} (t) \ = \ 0 \quad \hbox{for} \ i = 1, 2, \dots, F, $$
 there exists a site $v > u$ such that
 $$ P \,(\zeta_s (u) = F \ \hbox{for all} \ s > t \ \hbox{and} \ N_{v, i} (t) = N_{v + 1,i} (t) = 0 \ \hbox{for} \ i = 1, 2, \dots, F) \ > \ 0. $$ 
 The event $N_{v, i} (t) = N_{v + 1,i} (t) = 0$ for $i = 1, 2, \dots, F$ implies that there is no arrow connecting site $v$ and site $v + 1$ by
 time $t$.  Therefore conditional on this event, the evolution of the process on the interval $\{u, u + 1, \dots, v \}$ is independent
 of its evolution on $\{v + 1, v + 2, \dots \}$ up to time $t$.

\indent Because there are only finitely many possible configurations for the sites $u + 1, \dots, v - 1$, there must exist numbers
 $c_{w, i} \in \{0, 1 \}$ for $u < w \leq v$ and $i = 1, 2, \dots F$ such that
\begin{align}
 P \,(\zeta_s (u) = F \ \hbox{for all} \ s > t, \ \xi_t (w, i) & = c_{w, i} \ \hbox{for} \ u < w \leq v \ \hbox{and} \ i = 1, 2, \dots, F, \nonumber \\ 
     & \hbox{and} \ N_{v, i} (t) = N_{v + 1,i} (t) = 0 \ \hbox{for} \ i = 1, 2, \dots, F) \ > \ 0. \nonumber
\end{align}
 Let $c_{u,i} = 1$ for $i = 1, 2, \dots, F$.
 Clearly there is a positive probability that
 $$ \begin{array}{rl}
    \xi_0 (w, i) \ = \ c_{w,i} & \hbox{for} \ w = u, u + 1, \dots, v \ \hbox{and} \ i = 1, 2, \dots, F \ \hbox{and} \vspace{4pt} \\
     N_{w, i} (t) \ = \ 0 & \hbox{for} \ w = u, u + 1, \dots, v \ \hbox{and} \ i = 1, 2, \dots, F \end{array} $$
 so it follows from the conditional independence noted above that
\begin{align}
 P \,(\zeta_s (u) = F \ \hbox{for all} \ s > t, \ \xi_0 (w, i) & = c_{w, i} \ \hbox{for} \ u \leq w \leq v \ \hbox{and} \ i = 1, 2, \dots, F, \nonumber \\
    & \hbox{and} \ N_{w, i} (t) = 0 \ \hbox{for} \ w = u, u + 1, \dots, v + 1) \ > \ 0. \nonumber
\end{align}
 However, the probability on the left-hand side is at most
 $$ P \,(\zeta_s (u) = F \ \hbox{for all} \ s > 0 \ \hbox{and} \ \xi_0 (u, i) = 1 \ \hbox{for} \ i = 1, 2, \dots, F), $$
 which in turn is at most $p$.
 It follows that $p > 0$.
\end{demon}

\section{Extinction of the particles}

\indent In this section, we prove almost sure extinction of the systems of annihilating random walks.  That is, we show that 
\begin{equation}\label{ann}
\lim_{t \rightarrow \infty} \ P \,(\zeta_t(u) = 0) \ = \ 1 \quad \hbox{for all} \ u \in \D.
\end{equation}
We use different strategies to
 deal with active particles and frozen particles.
 We start by proving extinction of the active particles since it is one of the keys to showing extinction of the frozen particles,
 but the main ingredient to prove the latter is the result of Proposition \ref{prop:symmetry}.
 To see that (\ref{ann}) indeed implies Theorem \ref{thm}, we observe that for all $x, y \in \Z$ with $x < y$, we have
 $$ \begin{array}{rcl}
    \displaystyle \lim_{t \to \infty} \ P \,(\eta_t (x) \neq \eta_t (y)) & \leq &
    \displaystyle \lim_{t \to \infty} \ P \,(\eta_t (x + j - 1) \neq \eta_t (x + j) \ \hbox{for some} \ j = 1, 2, \ldots, y - x) \vspace{6pt} \\ \hspace{70pt} & \leq &
    \displaystyle \lim_{t \to \infty} \ P \,(\zeta_t (x + j - 1/2) \neq 0 \ \hbox{for some} \ j = 1, 2, \ldots, y - x) \\ \hspace{70pt} & \leq &
    \displaystyle \lim_{t \to \infty} \ \sum_{j = 1}^{y - x} \ P \,(\zeta_t (x + j - 1/2) \neq 0) \ = \ 0. \end{array} $$
 It remains to prove almost sure extinction of the systems of annihilating random walks, which is done in Lemma \ref{lem:active}
 for the active particles and in Lemma \ref{lem:frozen} for the frozen particles.
 Note that Lemmas \ref{lem:active} and \ref{lem:frozen} immediately imply (\ref{ann}).
 
\begin{lemma} --
\label{lem:limits}
 The limits
 $$ \lim_{t \to \infty} \ E \,(\zeta_t (u) \ \ind \{\zeta_t (u) \neq F \}) \quad \hbox{and} \quad
    \lim_{t \to \infty} \ E \,(\zeta_t (u) \ \ind \{\zeta_t (u) = F \}) $$
 exist and do not depend on the choice of $u \in \D$.
\end{lemma}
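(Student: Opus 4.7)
The plan is as follows. The $u$-independence follows immediately from translation invariance: both the iid initial distribution and the generator $L$ are invariant under shifts of $\Z$, so for every $t \geq 0$ the distribution of the random field $(\zeta_t(u))_{u \in \D}$ is translation-invariant, and each of the two expectations in the statement depends only on $t$.

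For existence of the limits, I would first show that the sum of the two expectations converges via a monotonicity argument. The symmetry between the $F$ features, preserved by both the initial distribution and the dynamics, implies that the one-dimensional marginal $p(t) := P(\xi_t(u, i) = 1)$ is the same for every $i$, so $E[\zeta_t(u)] = F\,p(t)$. Since particles can only disappear through pairwise annihilation and are never created, $p(t)$ is non-increasing in $t$ (this is cleanly verified by approximating $\Z$ by a finite torus, on which the total particle count at each level is a non-increasing integer-valued process, and then passing to the limit). Hence $p(t) \to p_\infty$ for some $p_\infty \geq 0$, and consequently $E[\zeta_t(u)\,\ind\{\zeta_t(u) = F\}] + E[\zeta_t(u)\,\ind\{\zeta_t(u) \neq F\}] = F\,p(t)$ converges to $F\,p_\infty$. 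It then suffices to prove that one of the two pieces converges on its own.

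To handle the frozen piece $\psi(t) := F\,P(\zeta_t(u) = F)$, I would use a tightness argument on the compact state space $(\{1,2\}^F)^{\Z}$ equipped with the product topology. Since the process is Feller, the family of laws $\{\mathcal{L}(\eta_t)\}_{t \geq 0}$ is tight, and every weak subsequential limit is a translation-invariant stationary distribution $\nu$ for the Axelrod dynamics. If every such $\nu$ assigns the same value to the event $\{\zeta(u) = F\}$, then $\psi(t)$ has a unique subsequential limit, hence converges.

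The hard part will be establishing the uniqueness of $\nu(\{\zeta(u) = F\})$. The difficulty is that $P(\zeta_t(u) = F)$ is genuinely not monotone in $t$: new $F$-sites can form whenever an $(F-1)$-site receives a particle at its missing level, and $F$-sites are destroyed when a particle arrives at them and annihilates, so neither direct monotonicity for $P(\zeta_t(u) = F)$ nor for $P(\zeta_t(u) \neq F)$ is available. To pin down the value under a candidate $\nu$, I would combine Proposition \ref{prop:symmetry} (applied under $\nu$, so that $\nu$ cannot be supported on trajectories that keep a given site frozen forever) with the constraint $\int \zeta(u)\,d\nu = F\,p_\infty$ inherited from the previous step and the feature-exchangeability of $\nu$.
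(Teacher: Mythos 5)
Your reduction is sound up to the final step, and it coincides with the paper's setup: translation invariance of the initial law and the dynamics gives the $u$-independence, monotonicity of the total density $E(\zeta_t(u))$ (particles are only ever destroyed, in pairs) gives convergence of the \emph{sum} of the two expectations, and it then suffices to show that one of the two pieces converges on its own. The gap is in how you propose to get that last piece. First, subsequential weak limits of $\mathcal{L}(\eta_t)$ as $t \to \infty$ need not be stationary for a Feller process; only weak limits of the Ces\`{a}ro averages $T^{-1}\int_0^T \mathcal{L}(\eta_t)\,dt$ are guaranteed invariant, and controlling those would only give convergence of $T^{-1}\int_0^T \psi(t)\,dt$, not of $\psi(t)$. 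Second, the uniqueness step you correctly flag as the hard part is not actually carried out, and the ingredients you list cannot close it: the constraint $\int \zeta(u)\,d\nu = F p_\infty$ together with feature-exchangeability does not determine $\nu(\zeta(u)=F)$, since there are many exchangeable laws on $\{0,1\}^F$ with the same mean but different masses on the all-ones vector; and Proposition \ref{prop:symmetry} is proved for the process started from the specific i.i.d.\ initial condition (its proof reduces everything to the quantity $p$, defined via Bernoulli$(1/2)$ data on a half-line), so invoking it ``under $\nu$'' for an arbitrary weak limit is not justified.

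The paper avoids all of this with an elementary accounting argument that is the real content of the lemma and is absent from your proposal. Suppose $E(\zeta_t(u)\,\ind\{\zeta_t(u)\neq F\})$ fails to converge, so it oscillates by some fixed amount $5\ep$ along infinitely many pairs of times $s_j < t_j$, all after the total density has come within $\ep$ of its limit $L$. Since the total density is nearly constant, each upswing of the active piece forces $P(\zeta_t(u)=F)$ to drop by at least $\ep/F$ over $(s_j,t_j)$. But an $F$-site can only be destroyed by an annihilation, which removes two particles, so a net drop of $\ep/F$ in the density of $F$-sites costs at least $2\ep/F$ of total particle density. Summing over $F$ disjoint intervals and using monotonicity of $E(\zeta_t(u))$ forces the total density to decrease by more than $2\ep$ after it is already within $\ep$ of $L$ --- a contradiction. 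The structural fact you are missing is precisely this coupling between the destruction of $F$-sites and the loss of total mass, which lets the already-established convergence of $E(\zeta_t(u))$ do all the work without any appeal to stationary measures.
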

\begin{proof}
 First of all, since the initial configuration as well as the evolution rules of the process are translation invariant in space, the
 probability that $u$ has $i$ particles at time $t$ does not depend on the choice of a specific site $u \in \D$, i.e.
 $$ P \,(\zeta_t (u) = i) \ = \ P \,(\zeta_t (v) = i) \quad \hbox{for all} \ u, v \in \D. $$
 This implies that the limits superior and limits inferior of the expected values in the statements do not depend on $u$.
 In addition, since particles can only annihilate, the expected number of particles per site is a nonincreasing function of time so
 it has a limit as $t \to \infty$ and we write
\begin{equation}
\label{eq:limits-1}
 \limsup_{t \to \infty} \ E \,(\zeta_t (u)) \ = \ \liminf_{t \to \infty} \ E \,(\zeta_t (u)) \ = \ \lim_{t \to \infty} \ E \,(\zeta_t (u)) \ = \ L.
\end{equation}
 Now, seeking a contradiction, we assume that
\begin{equation}
\label{eq:limits-2}
 L_- \ = \ \liminf_{t \to \infty} \ E \,(\zeta_t (u) \ \ind \{\zeta_t (u) \neq F \})
     \ < \ \limsup_{t \to \infty} \ E \,(\zeta_t (u) \ \ind \{\zeta_t (u) \neq F \}) \ = \ L_+
\end{equation}
 and let $\ep > 0$ such that $5 \ep = L_+ - L_-$.
 In view of \eqref{eq:limits-1} there exists $t_0 < \infty$ such that
\begin{equation}
\label{eq:limits-3}
  L - \ep \ < \ E \,(\zeta_t (u)) \ < \ L + \ep \quad \hbox{for all} \ t > t_0.
\end{equation}
 Also, in view of \eqref{eq:limits-2} there exists an infinite sequence of times
 $$ t_0 \ < \ s_1 \ < \ t_1 \ < \ s_2 \ < \ t_2 \ < \ \cdots \ < \ s_j \ < \ t_j \ < \ \cdots \ < \ \infty $$
 such that for all integers $j > 0$, we have
\begin{equation}
\label{eq:limits-4}
   E \,(\zeta_{s_j} (u) \ \ind \{\zeta_{s_j} (u) \neq F \}) \ < \ L_- + \ep \quad \hbox{and} \quad
   E \,(\zeta_{t_j} (u) \ \ind \{\zeta_{t_j} (u) \neq F \}) \ > \ L_+ - \ep.
\end{equation}
 It directly follows from \eqref{eq:limits-3} and \eqref{eq:limits-4} that
 $$ \begin{array}{l}
     E \,(\zeta_{t_j} (u) \ \ind \{\zeta_{t_j} (u) = F \}) \ - \ E \,(\zeta_{s_j} (u) \ \ind \{\zeta_{s_j} (u) = F \}) \vspace{6pt} \\ \hspace{20pt} = \
     E \,(\zeta_{t_j} (u)) \ - \ E \,(\zeta_{s_j} (u)) \ + \
     E \,(\zeta_{s_j} (u) \ \ind \{\zeta_{t_j} (u) \neq F \}) \ - \ E \,(\zeta_{t_j} (u) \ \ind \{\zeta_{s_j} (u) \neq F \}) \vspace{6pt} \\ \hspace{20pt} < \
    (L + \ep) \ - \ (L - \ep) \ + \ (L_- + \ep) \ - \ (L_+ - \ep) \ = \ L_- \ - \ L_+ \ + \ 4 \ep \ = \ - \ep \end{array} $$
 for all $j > 0$ from which we deduce that
\begin{equation}
\label{eq:limits-5}
  \begin{array}{l}
    P \,(\zeta_{t_j} (u) = F) \ - \ P \,(\zeta_{s_j} (u) = F) \vspace{6pt} \\ \hspace{20pt} = \
    F^{-1} \ E \,(\zeta_{t_j} (u) \ \ind \{\zeta_{t_j} (u) = F \}) \ - \ F^{-1} \ E \,(\zeta_{s_j} (u) \ \ind \{\zeta_{s_j} (u) = F \}) \ < \ - \ep F^{-1}. \end{array}
\end{equation}
 Now, since each time an $F$-site becomes an $(F - 1)$-site, there are two particles that annihilate each other, the inequality
 in \eqref{eq:limits-5} also implies that
 $$ E \,(\zeta_{t_j} (u)) \ - \ E \,(\zeta_{s_j} (u)) \ \leq \ 2 \times P \,(\zeta_{t_j} (u) = F) \ - \ 2 \times P \,(\zeta_{s_j} (u) = F) \ < \ - 2 \ep F^{-1}. $$
 In particular, using once more that the expected number of particles per site is a nonincreasing function of time, and applying
 the previous inequality $F$ times, we obtain
 $$ \begin{array}{l}
    \displaystyle E \,(\zeta_{t_F} (u)) \ - \ E \,(\zeta_{s_1} (u)) \ \leq \
    \displaystyle \sum_{j = 1}^F \ \big( E \,(\zeta_{t_j} (u)) \ - \ E \,(\zeta_{s_j} (u)) \big) \ < \ - 2 \ep, \end{array} $$
 which contradicts \eqref{eq:limits-3} above.
 In particular, \eqref{eq:limits-2} does not hold so the expected number of active particles per site has a limit as $t \to \infty$.
 To prove that the second limit in the statement also exists, we simply observe that
 $$ E \,(\zeta_t (u) \ \ind \{\zeta_t (u) = F \}) \ = \ E \,(\zeta_t (u)) \ - \ E \,(\zeta_t (u) \ \ind \{0 < \zeta_t (u) < F \}) $$
 and invoke the fact that both expected values on the right-hand side have a limit as times goes to infinity.
 This completes the proof.
\end{proof}

\begin{lemma} --
\label{lem:active}
 There is extinction of the active particles, i.e.,
 $$ \lim_{t \to \infty} \,P \,(0 < \zeta_t (u) < F) \ = \ 0 \quad \hbox{for all} \ u \in \D. $$
\end{lemma}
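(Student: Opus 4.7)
The plan is to argue by contradiction. By Lemma~\ref{lem:limits} the limit
\[
 L_{\mathrm{act}} \;:=\; \lim_{t \to \infty} \,E \,(\zeta_t (u) \,\ind \{\zeta_t (u) \neq F \})
\]
exists and is independent of $u$. Since $\zeta_t (u) \,\ind \{0 < \zeta_t (u) < F \} \leq F - 1$, assuming $L_{\mathrm{act}} > 0$ gives $P \,(0 < \zeta_t (u) < F) \geq L_{\mathrm{act}} / (F - 1) > 0$ for all $t$ sufficiently large, and the task is to contradict this.

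First I would compute the time-derivative of the expected particle density at site $u$. Because particles disappear only through pairwise annihilation, and because the initial law and dynamics are invariant under both translations and reflections, the graphical construction yields
\[
 -\frac{d}{dt} \,E \,(\zeta_t (u)) \;=\; 2 \,\sum_{i = 1}^F \,E \,[\xi_t (u, i) \,\xi_t (u + 1, i) \,r (\zeta_t (u))] .
\]
Since $E \,(\zeta_t (u))$ is nonincreasing in $t$ and bounded below by zero, the right-hand side is integrable on $[0, \infty)$.

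Next, observing that $r (j) \geq r (F - 1) = 1 / (F (F - 1))$ for every $0 < j < F$, while $r (F) = 0$ and $\xi_t (u, i) = 0$ whenever $\zeta_t (u) = 0$, I would deduce
\[
 \int_0^\infty \,\sum_{i = 1}^F \,E \,[\xi_t (u, i) \,\xi_t (u + 1, i) \,\ind \{0 < \zeta_t (u) < F \}] \,dt \;<\; \infty .
\]
The integrand is bounded and its derivative (obtained by applying the Markov generator to a cylinder function) is also bounded, hence the integrand is uniformly continuous in $t$. A uniformly continuous nonnegative integrable function must tend to $0$ at infinity, so the integrand vanishes as $t \to \infty$. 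In particular, asymptotically, whenever $u$ is active at level $i$, the neighbor $u + 1$ carries no particle at level $i$, and by reflection symmetry the same holds for $u - 1$.

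Finally I would combine this structural conclusion with $L_{\mathrm{act}} > 0$ to reach a contradiction. The assumption gives a positive asymptotic density of active level-$i$ particles, which by the preceding step must be isolated from their immediate neighbors at the same level. Yet each active particle performs a symmetric random walk at rate at least $1 / (F (F - 1)) > 0$, and recurrence of the one-dimensional random walk forces it to meet another level-$i$ particle within an almost surely finite time, producing annihilations at positive rate and contradicting the vanishing of the integrand. The main obstacle is formalizing this last step in the multilevel setting: because the jump rate $r (\zeta_t (u))$ depends on all levels simultaneously, the level-$i$ marginal is not a literal annihilating random walk. A natural way to close the argument is to couple the level-$i$ dynamics with an auxiliary system of independent symmetric annihilating random walks of rate $1 / (F (F - 1))$, for which extinction starting from product-Bernoulli initial data is classical; the anti-correlation established in the previous step should make the level-$i$ system dominate the reference system in a monotone sense, forcing level-by-level extinction and contradicting $L_{\mathrm{act}} > 0$.
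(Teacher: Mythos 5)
Your first half is sound and matches the paper's key estimate: since each annihilation removes two particles and the density starts at $F$ per site, the expected total number of annihilations per site is at most $F/2$, which is the content of your statement that the annihilation rate is integrable in time (the paper's bound $E(\mathcal A_t(u)) \leq F/2$). The genuine gap is in the last step, and it concerns precisely the feature that distinguishes this model from ordinary annihilating random walks: \emph{freezing}. Your claim that ``each active particle performs a symmetric random walk at rate at least $1/(F(F-1))$, and recurrence forces it to meet another level-$i$ particle'' fails as stated, because a particle stops moving the moment its site becomes an $F$-site (recall $r(F) = 0$), and this can happen before any collision occurs. A positive density of active particles could a priori be sustained with only finitely many annihilations if particles keep entering and leaving the frozen state, and nothing in your argument rules this out. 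The paper closes exactly this loophole by introducing a second counter, the number of freezing events $\mathcal F_t(u)$, and observing that two consecutive freezings at a site must be separated by an annihilation at $u$ or $u - 1$ (a frozen site can only lose a particle when a neighboring particle jumps onto it and annihilates), whence $E(\mathcal F_t(u)) \leq 1 + E(\mathcal A_t(u)) \leq 1 + F/2$. With both counters bounded, a persistent positive density of active particles --- each doomed either to annihilate or to freeze --- would force $E(\mathcal A_t(u)) + E(\mathcal F_t(u)) \to \infty$, the desired contradiction.

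The repair you propose, a monotone coupling with an auxiliary system of independent rate-$1/(F(F-1))$ annihilating walks, does not close the gap: annihilating particle systems are notoriously non-attractive (adding a particle can prevent annihilations elsewhere), the level-$i$ marginal has configuration-dependent jump rates that can vanish, and the ``anti-correlation'' you extract from the vanishing integrand is a statement about the probability of adjacent same-level pairs, not a structural property that would support stochastic domination. I would replace your entire last paragraph by the freezing-event count sketched above; your setup (contradiction via Lemma \ref{lem:limits} together with the integrability of the annihilation rate) then slots directly into the paper's argument.
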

\begin{proof}
 Note first that, by Lemma \ref{lem:limits}, the expected number of active particles per site does not depend on the choice of a
 specific site and has a limit as time goes to infinity.
 We assume by contradiction that this limit is positive, which implies that
\begin{equation}
\label{eq:active-1}
  E \,(\zeta_t (u) \ \ind \{0 < \zeta_t (u) < F \}) \ \geq \ M \ > \ 0 \quad \hbox{for all} \ t \geq 0.
\end{equation}
 We say that there is an event at site $u$ at time $t$ if one of the following occurs.
\begin{enumerate}
 \item Annihilating event: $\zeta_t (u) + \zeta_t (u + 1) = \zeta_{t-} (u) + \zeta_{t-} (u + 1) - 2$. \vspace{4pt}
 \item Freezing event: $\zeta_{t-} (u) = F - 1$ and $\zeta_t (u) = F$.
\end{enumerate}
 Denote by $\mathcal A_t (u)$ and $\mathcal F_t (u)$ the number of events at site $u$ by time $t$, and observe that the joint distribution
 of these two random variables does not depend on the specific choice of the site $u$.
 This follows by again invoking the translation invariance in space of the initial configuration and the evolution rules of the process.
 Since active particles evolve according to symmetric random walks run at a positive rate, they are doomed to either annihilation
 or freezing.  Hence, \eqref{eq:active-1} implies that
\begin{equation}
\label{eq:active-2}
  \lim_{t \to \infty} \ E \,(\mathcal A_t (u)) \ + \ \lim_{t \to \infty} \ E \,(\mathcal F_t (u)) \ = \ \infty.
\end{equation}
 Now, we observe that, since the initial number of particles per site is bounded by $F$ and each annihilating event removes two particles
 from the system,
\begin{equation}
\label{eq:active-3}
  E \,(\mathcal A_t (u)) \ \leq \ F / 2 \ < \ \infty \quad \hbox{for all} \ t \geq 0.
\end{equation}
 In addition, two consecutive freezing events at site $u$ must be separated by an annihilating event at either site $u$ or site
 $u - 1$ which gives the upper bound
\begin{equation}
\label{eq:active-4}
  E \,(\mathcal F_t (u)) \ \leq \ 1 + E \,(\mathcal A_t (u)) \ \leq \ 1 + F / 2 \ < \ \infty \quad \hbox{for all} \ t \geq 0.
\end{equation}
 Since the combination of \eqref{eq:active-3} and \eqref{eq:active-4} contradicts \eqref{eq:active-2}, the lemma follows.
\end{proof}

\begin{lemma} --
\label{lem:frozen}
 There is extinction of the frozen particles, i.e.,
 $$ \lim_{t \to \infty} \,P \,(\zeta_t (u) = F) \ = \ 0 \quad \hbox{for all} \ u \in \D. $$
\end{lemma}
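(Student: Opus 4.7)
The plan is to show that the indicator $\mathbf{1}\{\zeta_t(u) = F\}$ converges almost surely as $t \to \infty$ and then to use Proposition \ref{prop:symmetry} to rule out that the almost sure limit equals one. Bounded convergence will then yield $\lim_{t \to \infty} P(\zeta_t(u) = F) = 0$.

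The first step is to bound the total number of times that this indicator flips. Reusing the event-counting vocabulary introduced in the proof of Lemma \ref{lem:active}, the indicator jumps from $0$ to $1$ exactly at freezing events at site $u$, so the number of such upward flips by time $t$ equals $\mathcal{F}_t(u)$ and has finite expectation uniformly in $t$ by \eqref{eq:active-4}. The indicator jumps from $1$ to $0$ exactly when a particle jumps onto the $F$-site $u$ from site $u-1$ or site $u+1$; since the incoming particle meets one already present at its level and they annihilate, each such downward flip corresponds to an annihilating event at either site $u-1$ or site $u$, so the number of downward flips by time $t$ is at most $\mathcal{A}_t(u-1) + \mathcal{A}_t(u)$, which again has finite expectation uniformly in $t$ by \eqref{eq:active-3} combined with the translation invariance of the dynamics. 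Summing these bounds, the total number of flips of $\mathbf{1}\{\zeta_s(u) = F\}$ for $s \leq t$ is a nondecreasing integer-valued process whose expectation is bounded uniformly in $t$, hence it is almost surely finite, and $\mathbf{1}\{\zeta_t(u) = F\}$ converges almost surely to some random limit $X \in \{0,1\}$.

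The second step identifies $X$. By Proposition \ref{prop:symmetry} applied at any fixed time $q \geq 0$, the event $\{\zeta_q(u) = F \text{ and } \zeta_s(u) = F \text{ for all } s > q\}$ has probability zero; taking a union over rational $q$ shows that with probability one there is no finite time after which the site $u$ remains an $F$-site forever. On the event $\{X = 1\}$, however, the almost sure convergence of the indicator means that $\zeta_s(u) = F$ for every sufficiently large $s$, which by the previous sentence happens with probability zero. Therefore $X = 0$ almost surely, and since the indicator is dominated by $1$, dominated convergence yields $\lim_{t \to \infty} P(\zeta_t(u) = F) = E[X] = 0$.

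The main obstacle is the bookkeeping in the first step, specifically verifying that every change of the indicator is accounted for by a freezing or annihilation bound coming from Lemma \ref{lem:active}; once this is done, the appeal to Proposition \ref{prop:symmetry} and to dominated convergence are essentially immediate.
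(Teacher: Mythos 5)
Your proof is correct, but it follows a genuinely different route from the paper's. The paper argues at the level of distributions: it first invokes Lemma \ref{lem:limits} to know that $\lim_{t\to\infty}P(\zeta_t(u)=F)$ exists, then combines the bound \eqref{eq:frozen-1} from Lemma \ref{lem:active} with repeated applications of Proposition \ref{prop:symmetry} along a sequence of times $t_j$ to obtain the contraction $P(\zeta_{t_{j+1}}(u)=F)\leq (1-1/F)\,P(\zeta_{t_j}(u)=F)+\ep/(2F)$, and concludes by iterating. You instead argue pathwise: since $r(F)=0$, a frozen site can only lose that status through an incoming particle, which necessarily annihilates, and since $\zeta_t(u)$ moves by steps of $\pm 1$, every upward flip of $\ind\{\zeta_t(u)=F\}$ is a freezing event at $u$ and every downward flip is an annihilating event at $u-1$ or $u$; the expectation bounds \eqref{eq:active-3} and \eqref{eq:active-4} --- which, as you implicitly rely on, are unconditional and do not depend on the contradiction hypothesis \eqref{eq:active-1} inside the proof of Lemma \ref{lem:active} --- then make the total flip count almost surely finite, so the indicator converges almost surely, and the countable union over rational times $q$ of the null events supplied by Proposition \ref{prop:symmetry} forces the limit to be $0$. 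Your bookkeeping in the first step is complete. What your version buys is a shorter, more self-contained argument that bypasses Lemma \ref{lem:limits} and the statement of Lemma \ref{lem:active} altogether, and it yields the strictly stronger almost-sure conclusion that each site is an $F$-site during only finitely many excursions; what the paper's version buys is an explicit geometric decay along the subsequence $\{t_n\}$ and a structure that leans on Lemma \ref{lem:limits}, which the paper has already established for other purposes.
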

\begin{proof}
 Note as previously that, by Lemma \ref{lem:limits}, the limit to be estimated exists and does not depend on the specific choice
 of site $u \in \D$.
 Let $\ep > 0$.
 By Lemma \ref{lem:active}, there is $t_0 < \infty$ large such that
\begin{equation}
\label{eq:frozen-1}
  E \,(\zeta_t (u) \ \ind \{0 < \zeta_t (u) < F \}) \ < \ \frac{\ep}{2} \quad \hbox{for all} \ t \geq t_0.
\end{equation}
 To prove extinction of the frozen particles, we apply successively Proposition \ref{prop:symmetry} to obtain the existence of an
 increasing sequence of times $\{t_j : j \geq 0 \}$ tending to infinity such that
 $$ P \,(\zeta_s (u) = F \ \hbox{for all} \ s \in (t_j, t_{j + 1}) \ | \ \zeta_{t_j} (u) = F) \ < \ \frac{1}{2} \quad \hbox{for all} \ j \geq 0. $$
 In particular, since each time an $F$-site becomes an $(F - 1)$-site, there are two particles that annihilate each other,
 the previous inequality implies that
\begin{equation}
\label{eq:frozen-2}
  E \,(\zeta_{t_j} (u)) \ - \ E \,(\zeta_{t_{j + 1}} (u)) \ > \
  2 \times \bigg(1 - \frac{1}{2} \bigg) \ P \,(\zeta_{t_j} (u) = F) \ = \ P \,(\zeta_{t_j} (u) = F)
\end{equation}
 for all integers $j \geq 0$.
 To see this, note that if a site is frozen at time $t_j$, then with probability at least $1/2$ this site will be visited by an active particle before time $t_{j+1}$, resulting in the annihilation of two particles.  Therefore, the expected number of
 particles killed per site between times $t_j$ and $t_{j+1}$ is larger than the probability of a site being an $F$-site at time $t_j$, which gives \eqref{eq:frozen-2}.
 Now, using \eqref{eq:frozen-2} and then \eqref{eq:frozen-1}, we deduce that
 $$ \begin{array}{l}
     F \times P \,(\zeta_{t_{j + 1}} (u) = F) \ = \
     E \,(\zeta_{t_{j + 1}} (u) \ \ind \{\zeta_{t_{j + 1}} (u) = F \}) \ \leq \
     E \,(\zeta_{t_{j + 1}} (u)) \vspace{6pt} \\ \hspace{15pt} \leq \
     E \,(\zeta_{t_j} (u)) \ - \ P \,(\zeta_{t_j} (u) = F) \vspace{6pt} \\ \hspace{15pt} = \
     E \,(\zeta_{t_j} (u) \ \ind \{\zeta_{t_j} (u) = F \}) \ + \
     E \,(\zeta_{t_j} (u) \ \ind \{\zeta_{t_j} (u) \neq F \}) \ - \ P \,(\zeta_{t_j} (u) = F) \vspace{6pt} \\ \hspace{15pt} \leq \
     F \ P \,(\zeta_{t_j} (u) = F) \ + \ \ep / 2 \ - \ P \,(\zeta_{t_j} (u) = F) \ = \
    (F - 1) \ P \,(\zeta_{t_j} (u) = F) \ + \ \ep / 2. \end{array} $$
 In particular, a simple induction gives
 $$ \begin{array}{rcl}
    \displaystyle P \,(\zeta_{t_n} (u) = F) & \leq &
    \displaystyle \bigg(1 - \frac{1}{F} \bigg) \ P \,(\zeta_{t_{n - 1}} (u) = F) \ + \ \frac{\ep}{2F} \vspace{4pt} \\ & \leq &
    \displaystyle \bigg(1 - \frac{1}{F} \bigg)^n \ P \,(\zeta_{t_0} (u) = F) \ + \ \frac{\ep}{2F} \ \sum_{j = 0}^{n - 1} \ \bigg(1 - \frac{1}{F} \bigg)^j \vspace{4pt} \\ & \leq &
    \displaystyle \bigg(1 - \frac{1}{F} \bigg)^n + \ \frac{\ep}{2F} \ \frac{1}{1 - (1 - 1/F)} \ = \
    \displaystyle \bigg(1 - \frac{1}{F} \bigg)^n + \ \frac{\ep}{2} \ \leq \ \ep \end{array} $$
 for sufficiently large $n$.  In view of Lemma \ref{lem:limits}, the previous inequality implies that
 $$ \lim_{t \to \infty} \ P \,(\zeta_t (u) = F) \ = \ \lim_{n \to \infty} \ P \,(\zeta_{t_n} (u) = F) \ \leq \ \ep \quad \hbox{for all} \ u \in \D. $$
 Since this holds for all $\ep > 0$ arbitrarily small, the lemma follows.
\end{proof}


\begin{thebibliography}{10}

\bibitem{adelman_1976}
 Adelman, O. (1976). Some use of some ``symmetries'' of some random process.
\emph{Ann. Inst. H. Poincar\'e Sect. B (N.S.)} \textbf{12}, 193--197.

\bibitem{axelrod_1997}
 Axelrod, R. (1997). The dissemination of culture: a model with local convergence and global polarization.
\emph{J. Conflict. Resolut.} \textbf{41}, 203--226.

\bibitem{clifford_sudbury_1973}
 Clifford, P. and Sudbury, A. (1973). A model for spatial conflict.
\emph{Biometrika} \textbf{60} 581--588.

\bibitem{holley_liggett_1975}
 Holley, R. A. and Liggett, T. M. (1975). Ergodic theorems for weakly interacting systems and the voter model.
\emph{Ann. Probab.} \textbf{3} 643--663.

\bibitem{lanchier_2011}
 Lanchier, N. (2011). The Axelrod model for the dissemination of culture revisited.
 To appear in \emph{Ann. Appl. Probab.}

\bibitem{vilone_vespignani_castellano_2002}
 Vilone, D., Vespignani, A. and Castellano, C. (2002). Ordering phase transition in the one-dimensional Axelrod model.
\emph{Eur. Phys. J. B} \textbf{30}, 399--406.

\end{thebibliography}
\end{document}